\documentclass[reqno,12pt]{article}

\usepackage{verbatim,graphicx}

\RequirePackage[latin1]{inputenc} \RequirePackage[english]{babel}
\RequirePackage{amssymb} \RequirePackage{amsfonts}
\RequirePackage{amsmath} \RequirePackage{amsthm}
\RequirePackage{latexsym}

\newcommand{\tbf}[1]{{\bf{#1}}}

\newcommand{\R}{\mathbb{R}}

\newcommand{\N}{\mathbb{N}}

\newcommand{\Var}{\operatorname{Var}}

\newcommand{\argmin}{\operatorname{argmin}}

\theoremstyle{plain}

\newtheorem{lema}{Lemma}

\newtheorem{teor}{Theorem}

\newtheorem{teor*}{Teorema}
\theoremstyle{definition}
\newtheorem{rem}{Remark}

\setlength{\oddsidemargin}{\evensidemargin} \hfuzz2pt

\begin{document}
\thispagestyle{empty} \vspace*{1cm}
\begin{center}
{\bf\large A note on kernel density estimation at a parametric rate}
 \end{center}

\begin{center}
{\sc  J. E. Chac\'{o}n, J. Montanero, A. G. Nogales}\\
\end{center}

\begin{center}
Dpto. de Matem\'{a}ticas, Universidad de Extremadura, SPAIN.
\end{center}
\vspace{.4cm}
\begin{quote}
\hspace{\parindent} {\small {\sc Abstract.} In the context of kernel
density estimation, we give a characterization of the kernels for
which the parametric mean integrated squared error rate $n^{-1}$ may
be obtained, where $n$ is the sample size. Also, for the cases where
this rate is attainable, we give an asymptotic bandwidth choice that
makes the kernel estimator consistent in mean integrated squared
error at that rate and a numerical example showing the superior
performance of the superkernel estimator when the bandwidth is
properly chosen.

}
 \end{quote}

\vfill
\begin{itemize}
\item[] \hspace*{-1cm} {\em AMS Subject Class.} (2000): {\em
Primary\/} 62G05. {\em Secondary\/} 62G07. \item[] \hspace*{-1cm}
{\em Key words and phrases:} optimal density estimation,
convergence rates, mean integrated squared error, superkernel.
\item[] \hspace*{-1cm} {\em Short running title:} Kernel density
estimation at a parametric rate \item[] \hspace*{-1cm} Research
supported by Spanish Ministerio de Ciencia y Tecnolog\'{\i}a project
MTM2005-06348. \item[] \hspace*{-1cm} Proofs should be sent to:
Agust\'{\i}n G. Nogales, Dpto. de
    Matem\'{a}ticas, Universidad de Extremadura, Avda. de Elvas, s/n, 06071--Badajoz,
    SPAIN. e-mail: nogales@unex.es
 \end{itemize}

\newpage

\section{Introduction}
If $X_1,\dots,X_n$ is a sample from a probability distribution on
the real line with density $f$, the kernel density estimator is
given by
$$f_{n,K,h}(x)=\frac{1}{n}\sum_{i=1}^nK_h(x-X_i),$$ where the kernel
$K$ is an integrable function with $\int K=~1$, the bandwidth $h$
is a positive real number and we have used the notation
$K_h(x)=K(x/h)/h$; see, e.g., Silverman (1986), Simonoff (1996) or Wand and Jones (1995). 
The $L_2$ error criterion will be used here; that is, we will
measure the error of the estimate $f_{n,K,h}$ through the mean
integrated square error (MISE), defined by
\begin{equation*}
\mbox{MISE}_f(f_{n,K,h})=E_f\int[f_{n,K,h}(x)-f(x)]^2dx.
\end{equation*}
We will assume henceforth that all the kernels below are bounded
functions, continuous at zero and such that $\int K^2<2K(0)$. This
technical conditions ensure that an optimal bandwidth
$h_{0n}(f)=\argmin_{h>0}\text{MISE}_f(f_{n,K,h})$ exists (see Chacón
et al., 2006).

The main goal of this paper is to characterize the kernel functions
that make the MISE converge to zero as fast as possible. Most
commonly used kernels are the positive ones, because they produce
{\it bona fide} density estimators; that is, estimators that, for
every observed sample, provide a true density function (i.e.,
$f_{n,K,h}\ge 0$ and $\int f_{n,K,h}=1$). However, it is widely
known that for positive kernels the MISE cannot decrease to zero
faster than $n^{-4/5}$ (Rosenblatt, 1956). In this sense, some
benefit can be obtained if we allow the kernel to take negative
values (see Theorem 1 below), although the price to be paid is that
the resulting estimate is not a positive function. Nevertheless, in
a recent paper, Glad, Hjort and Ushakov (2003) show that, based on a
non-bona fide estimator, it is possible to construct a bona fide one
with even smaller MISE. Thus, there is no reason, in terms of MISE,
to avoid the use of kernels taking negative values in density
estimation.

Watson and Leadbetter (1963) showed, in a very general background,
that the MISE of kernel density estimators cannot decrease faster
than $n^{-1}$. Davis (1977) characterized the class of densities for
which this ``parametric" rate $n^{-1}$ can be achieved (see Theorem
3 below). In this paper, we give a characterization of those kernels
for which the MISE of the corresponding kernel estimator goes to
zero at rate $n^{-1}$ for some density, so that together with the
result of Davis (1977) we obtain a precise description of the family
of densities and kernels for which the parametric rate is attainable
(see Theorem 4). Besides, for this family we provide practical
bandwidth-choice advice for achieving this rate.

\section{Main results}

Let us recall some facts about kernels. If we denote by
$m_j(K)=\int x^jK(x)dx$ the $j$-th moment of a kernel $K$, we say
that $K$ is of finite order if the set
$$\mathcal A_K=\{j\in\N, j\ge1\colon m_j(K)\ne0\}$$
is non-empty. In this case, $k=\min\mathcal A_K$ is called the order
of the kernel $K$. If $\mathcal A_K=\varnothing$ then it is said
that $K$ is a kernel of infinite order and such a kernel should
satisfy $m_j(K)=0$ for $j=1,2,\ldots$

An example of an infinite order kernel is Natterer's kernel, whose
characteristic function is given by
$\varphi(t)=e^{-t^2/(1-t^2)}I_{[-1,1]}(t)$, where $I_A$ stands for
the indicator function of the set $A$ (see Devroye and Lugosi, 2001,
Ch. 17). If $K$ is the density of a symmetric distribution with
finite variance, then $K$ is a kernel of order 2. A method for
constructing a kernel of arbitrary finite order is shown in Schucany
and Sommers (1977); however, if we want a kernel $K$ to have order
$k>2$ then $K$ must necessarily take negative values.


Let us denote
$$\Phi(n,f,K)=\min_{h>0}\text{MISE}_f(f_{n,K,h})$$
that is, $\Phi(n,f,K)$ is the minimal MISE that can be achieved when
we use the kernel $K$ and a sample of size $n$ to estimate $f$. The
reason for using kernels of order greater than 2 (non-positive,
therefore) rests upon the following theorem, which can be found, for
instance, in Wand and Jones (1995).

\begin{teor} If $K\in L_2$ is a symmetric
kernel of finite order $k$ and the density $f$ has a $k$-th
continuous derivative belonging to $L_2$ then the minimal MISE that
may be obtained by estimating $f$ using a kernel estimator with
kernel $K$ is of exact order $n^{-2k/(2k+1)}$; that is,
$$\lim_{n\to\infty}n^{2k/(2k+1)}\Phi(n,f,K)=\alpha_1,$$
where $\alpha_1\in(0,\infty)$ is a constant depending on $f$ and
$K$.
\end{teor}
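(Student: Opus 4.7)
The plan is to proceed via the classical bias--variance decomposition and an asymptotic analysis of the resulting formula for $\operatorname{MISE}_f(f_{n,K,h})$. First I would write
\[
\operatorname{MISE}_f(f_{n,K,h}) = \int \Var(f_{n,K,h}(x))\,dx + \int \bigl[(K_h\ast f)(x)-f(x)\bigr]^2\,dx,
\]
and evaluate the integrated variance using $\Var(f_{n,K,h}(x))=n^{-1}\bigl[(K_h^2\ast f)(x)-(K_h\ast f)(x)^2\bigr]$, yielding
\[
\int \Var(f_{n,K,h}(x))\,dx = \frac{R(K)}{nh} - \frac{1}{n}\int (K_h\ast f)^2,
\]
with $R(K)=\int K^2$. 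Since $f\in L_2$ (because $f^{(k)}\in L_2$ and $f$ is a density, so standard interpolation/Fourier arguments apply) and $K\in L_2$, the second term is $O(n^{-1})$ and therefore asymptotically negligible relative to $R(K)/(nh)$ as long as $nh\to\infty$.

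Next I would handle the integrated squared bias via the Taylor expansion
\[
(K_h\ast f)(x)-f(x)=\int K(u)\bigl[f(x-hu)-f(x)\bigr]\,du.
\]
Because $K$ is symmetric, its order $k$ is even, and the vanishing moments $m_1(K)=\dots=m_{k-1}(K)=0$ make the leading contribution
\[
(K_h\ast f)(x)-f(x) = \frac{h^k}{k!}\,m_k(K)\,f^{(k)}(x) + r_h(x),
\]
where the continuity and $L_2$-integrability of $f^{(k)}$ allow one to prove, through a standard integral remainder argument and dominated convergence on the squared norm, that $\|r_h\|_2 = o(h^k)$. Squaring and integrating then gives
\[
\int \bigl[(K_h\ast f)-f\bigr]^2 = \frac{h^{2k}\,m_k(K)^2}{(k!)^2}\,\|f^{(k)}\|_2^2 + o(h^{2k}).
\]
I expect this remainder estimate to be the technically most delicate step, because the $o(h^k)$ pointwise control must be upgraded to an $o(h^{2k})$ control after squaring and integrating over the whole line.

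Combining both pieces yields the asymptotic MISE
\[
\operatorname{AMISE}(h) = \frac{R(K)}{nh} + \frac{h^{2k}\,m_k(K)^2}{(k!)^2}\,\|f^{(k)}\|_2^2,
\]
whose elementary minimization in $h$ gives $h_n^\ast = c(f,K)\,n^{-1/(2k+1)}$ and $\operatorname{AMISE}(h_n^\ast)=\alpha_1 n^{-2k/(2k+1)}$, with
\[
\alpha_1=\frac{2k+1}{(2k)^{2k/(2k+1)}}\,R(K)^{2k/(2k+1)}\left[\frac{m_k(K)^2\,\|f^{(k)}\|_2^2}{(k!)^2}\right]^{1/(2k+1)}.
\]
To conclude, I would upgrade the AMISE statement to the exact $\Phi(n,f,K)$. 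On the one hand, substituting $h=h_n^\ast$ into the MISE gives an upper bound of the stated order. For the matching lower bound, I would argue that the remainder $o$-terms obtained above are uniform on intervals $h\in[An^{-1/(2k+1)},Bn^{-1/(2k+1)}]$; outside such an interval either the variance or the bias term individually exceeds the announced rate, which forces the true minimizer to lie inside and makes $\Phi(n,f,K)/\operatorname{AMISE}(h_n^\ast)\to 1$. Assembling these observations yields $\lim_{n\to\infty} n^{2k/(2k+1)}\Phi(n,f,K)=\alpha_1$, as required.
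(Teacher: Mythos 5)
The paper does not prove Theorem 1 at all --- it is quoted as a known result from Wand and Jones (1995) --- and your argument is precisely the standard proof given there: the exact bias--variance decomposition of the MISE, the Taylor/moment expansion of the integrated squared bias to obtain the AMISE $R(K)/(nh)+h^{2k}m_k(K)^2R(f^{(k)})/(k!)^2$, and its minimization, with the correct constant $\alpha_1$. Your sketch is sound, and you rightly flag the only genuinely delicate points (the $o(h^{2k})$ control of the integrated squared remainder via continuity of translation in $L_2$ and Minkowski's integral inequality, and the localization of the true minimizer needed to pass from AMISE to $\Phi(n,f,K)$).
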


Thus, as we make the kernel order grow, the rate of convergence of
the optimal MISE to zero approaches the parametric rate $n^{-1}$,
although the class of densities for which this rate is valid gets
smaller and smaller. The question is: is there any kernel that
effectively attains the rate $n^{-1}$ for some density? The kernels
that achieve that MISE-rate for some density will deserve to be
called \emph{superkernels}; that is, a superkernel will be a kernel
$K$ which satisfies,
$$\lim_{n\to\infty} n\Phi(n,f,K)=\alpha_2$$
for some density $f$, with $0<\alpha_2<\infty$. As stated in the
previous section, our purpose here is to give a characterization of
such superkernels.


\vspace{1ex}

In view of Theorem 1, one is tempted to conjecture that an infinite
order kernel is a good candidate to be a super\-kernel; however, we
will see below that an infinite order kernel does not need to be a
superkernel.

\vspace{1ex}

Denote by $\varphi_K(t)$ 
the characteristic function of a kernel $K$ and
\begin{align*}
S_K&=\inf\{t\ge 0\colon|\varphi_K(t)-1|\neq0\}\\
T_K&=\inf\{r\ge0\colon|\varphi_K(t)-1|\neq0\text{ a.e. for }t\ge
r\}.
\end{align*}
That is, $S_K$ is the greatest value of $r$ such that $\varphi_K$ is
identically equal to 1 on $[0,r]$ and $T_K$ is the greatest value of
$t$ such that $\varphi_K(t)=1$. Notice that nearly every kernel used
in practice satisfies $S_K=T_K$.

The next result gives a characterization of the class of
superkernels, in terms of their characteristic functions.

\begin{teor}Let $K$ be a kernel in
$L_2$ such that $S_K=T_K$. The following statements are equivalent:
\begin{enumerate}
\item[i)] $S_K>0.$ \item[ii)] $\Phi(n,f,K)$ is of exact order $n^{-1}$ for some density $f\in L_2.$
\end{enumerate}
\end{teor}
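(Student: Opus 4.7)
The plan is to work with the Plancherel representation of the MISE,
\[
\mathrm{MISE}_f(f_{n,K,h})=\frac{R(K)}{nh}-\frac{1}{2\pi n}\int|\varphi_K(ht)|^2|\varphi_f(t)|^2\,dt+\frac{1}{2\pi}\int|\varphi_K(ht)-1|^2|\varphi_f(t)|^2\,dt,
\]
where $R(K)=\int K^2$, and to exploit opposite features of the variance contribution (first two terms) and the squared-bias contribution $B(h)=(2\pi)^{-1}\int|\varphi_K(ht)-1|^2|\varphi_f(t)|^2\,dt$ in each direction.

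For $(i)\Rightarrow(ii)$ I would produce an explicit density. Choose $f$ whose characteristic function $\varphi_f$ is supported in $[-a,a]$ for some $0<a<S_K$; a convenient instance is the Fej\'er-type density corresponding to $\varphi_f(t)=(1-|t|/a)_+$, which is nonnegative, has unit mass, and lies in $L_2$. Since $\varphi_K\equiv 1$ on $[-S_K,S_K]$, whenever $h\leq S_K/a$ we have $\varphi_K(ht)\equiv 1$ on the support of $\varphi_f$, so $B(h)$ vanishes and
\[
\mathrm{MISE}_f(f_{n,K,h})=\frac{1}{n}\Bigl(\frac{R(K)}{h}-\|f\|_2^2\Bigr)=O(n^{-1}).
\]
Combining with the Watson--Leadbetter lower bound $\Phi(n,f,K)\geq c/n$ yields the exact rate $n^{-1}$.

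For $(ii)\Rightarrow(i)$ I would argue by contrapositive. Assume $S_K=T_K=0$, so that $\varphi_K(t)\neq 1$ for a.e.\ $t\in\R$, fix any density $f\in L_2$, and suppose for contradiction that $\Phi(n,f,K)\leq C/n$ is realised along a sequence of approximate minimisers $h_n$. Young's convolution inequality gives $\|K_h*f\|_2\leq\|K\|_1\|f\|_2$, whence
\[
n\,\mathrm{MISE}_f(f_{n,K,h})\geq\frac{R(K)}{h}-\|K\|_1^2\|f\|_2^2,
\]
which forces $h_n\geq h_0>0$. I would then verify that $B$ is strictly positive and continuous on the one-point compactification $[h_0,\infty]$: positivity because $\varphi_K(ht)\neq 1$ a.e.\ while $|\varphi_f|^2>0$ on a neighbourhood of the origin; continuity on $(h_0,\infty)$ by dominated convergence with majorant $(\|K\|_1+1)^2|\varphi_f|^2$; continuity at $h=\infty$ via Riemann--Lebesgue applied to $K\in L_1$, which yields $B(h)\to\|f\|_2^2$. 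Compactness then delivers $B(h_n)\geq\varepsilon>0$ and, since $B(h_n)\leq\mathrm{MISE}_f(f_{n,K,h_n})\leq C/n$, the desired contradiction.

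The main obstacle is this contrapositive step, and specifically ruling out the scenario $h_n\to\infty$ along a subsequence; the Riemann--Lebesgue analysis at $h=\infty$ is what secures the required compactness. The hypothesis $S_K=T_K$ enters precisely at this point: it guarantees that when $(i)$ fails (that is, $S_K=0$) one actually has $\varphi_K\neq 1$ almost everywhere on $\R$, which is what makes $B(h)>0$ for every $h>0$ and keeps the infimum of $B$ over $[h_0,\infty]$ strictly positive.
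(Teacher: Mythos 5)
Your proposal is correct, and for the implication $ii)\Rightarrow i)$ it takes a genuinely different route from the paper. The paper reduces that implication to its Lemma 2: starting from the variance identity $\int\Var_f[f_{n,K,h}(x)]dx=R(K)/(nh)-R(K_h*f)/n$, it bounds $n\Phi(n,f,K)\geq R(K)/h_{0n}(f)-R(K_{h_{0n}(f)}*f)$ and then invokes two facts imported from Chac\'on et al.\ (2006), namely that $R(K_h*f)\to R(f)$ as $h\to0$ and, crucially, that $S_K=0$ forces the optimal bandwidth $h_{0n}(f)\to0$; the variance term then explodes and $n\Phi(n,f,K)\to\infty$. You run the variance--bias tension in the opposite direction and keep the argument self-contained: boundedness of $n\,\text{MISE}$ pins the minimising bandwidths in $[h_0,\infty)$ via Young's inequality, and on that set the bias functional $B$ is continuous, strictly positive (this is exactly where $S_K=T_K=0$, i.e.\ $\varphi_K\neq1$ a.e., enters, together with $|\varphi_f|>0$ near the origin), and tends to the positive constant $R(f)$ at infinity by Riemann--Lebesgue; the resulting uniform lower bound on $B$ contradicts $\Phi(n,f,K)\to0$. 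Your version buys independence from the external lemma on the limit behaviour of $h_{0n}(f)$ (and even from the existence of an exact minimiser, since approximate minimisers suffice) at the cost of the one-point-compactification analysis; the paper's version is shorter because it delegates that work. For $i)\Rightarrow ii)$ the two arguments are essentially the same zero-bias construction: the paper cites its Theorem 6 applied to the Fej\'er--de la Vall\'ee-Poussin density, while you build the compactly supported $\varphi_f$ explicitly and, correctly, supplement the $O(n^{-1})$ upper bound with the Watson--Leadbetter lower bound to obtain the \emph{exact} order.
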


The previous theorem allows us to give an alternative (and
equivalent) definition of a superkernel: we will say that a kernel
$K$ with $S_K=T_K$ is a superkernel if $S_K>0$; that is, if its
characteristic function is identically equal to 1 in a neighborhood
of the origin. This is just the classical definition of superkernel
used in Devroye (1992) or in Glad, Hjort and Ushakov (2003), for
instance. Thus, although this definition is not very intuitive,
Theorem 2 allows us to conclude that it is just the one that we were
looking for. Besides, from this characterization it follows that
Natterer's kernel, which has infinite order, is not a superkernel;
that is, the minimal MISE that we obtain using Natterer's kernel
cannot decrease to zero at rate $n^{-1}$ for any density. A
classical example of superkernel is given by the trapezoidal kernel
$K(x)=(\cos x-\cos(2x))/(\pi x^2)$, which has characteristic
function $\varphi_K(t)=I_{[0,1)}(|t|)+(2-|t|)I_{[1,2)}(|t|)$, so
that $S_K=T_K=1$; see Devroye and Lugosi (2001). Some more examples
of superkernels are included in Section 3 of McMurry and Politis
(2004), they are called infinite order flat-top kernels there.

The characterization of the class of densities for which the rate
$n^{-1}$ is attainable is given in a paper by Davis (1977). Let us
denote by $\varphi_f(t)$ the characteristic function of a density
$f$ and
\begin{align*}
C_f&=\sup\{r\ge 0\colon\varphi_f(t)\neq0\text{ a.e. for
}t\in[0,r]\}\\
D_f&=\sup\{t\ge0\colon\varphi_f(t)\neq0\}.
\end{align*}
Notice that the support of $\varphi_f$ is contained in $(-D_f,D_f)$;
moreover, this interval coincides with the support in the common
case where $C_f=D_f$.

\begin{teor}[Davis, 1977] Let $f$ be a density in $L_2$.
The following statements are equivalent:
\begin{enumerate}
\item[i)] $D_f<\infty$; i.e., $\varphi_f$ has bounded support.
\item[ii)] $\Phi(n,f,K)$ is of exact order $n^{-1}$ for some kernel
$K\in L_2.$
\end{enumerate}
\end{teor}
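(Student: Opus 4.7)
My main tool is the standard Parseval representation of the MISE,
\begin{equation*}
2\pi\,\text{MISE}_f(f_{n,K,h})=\int|\varphi_K(ht)-1|^2|\varphi_f(t)|^2\,dt+\frac{1}{n}\int|\varphi_K(ht)|^2\bigl(1-|\varphi_f(t)|^2\bigr)\,dt,
\end{equation*}
valid whenever $f,K\in L_2$. Since the Watson--Leadbetter lower bound $\Phi(n,f,K)\ge c/n$ is recalled in the introduction, to establish ``exact order $n^{-1}$'' I only need to produce (or to forbid) a matching upper bound.

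For (i) $\Rightarrow$ (ii), I take $K$ to be the trapezoidal superkernel displayed in Section~2, so that $\varphi_K\equiv 1$ on $[-S_K,S_K]$ with $S_K>0$, and fix any constant bandwidth $h\le S_K/D_f$. Then $\varphi_K(ht)=1$ on $[-D_f,D_f]\supseteq\mathrm{supp}(\varphi_f)$, so the bias term in the Parseval formula vanishes identically, while the variance term is bounded by $\|K\|_2^2/(nh)$ after the change of variables $s=ht$. Hence $\text{MISE}_f(f_{n,K,h})=O(n^{-1})$, giving the required exact order.

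For (ii) $\Rightarrow$ (i), I argue by contradiction. Suppose $D_f=\infty$ yet some $K\in L_2$ and bandwidths $h_n>0$ satisfy $\text{MISE}_f(f_{n,K,h_n})\le c/n$. Writing the right-hand side of the Parseval formula as $A_n+B_n/n$, I have $A_n\to 0$ and $B_n=O(1)$. Since $\int|\varphi_K(h_nt)|^2\,dt=2\pi\|K\|_2^2/h_n$ while $\int|\varphi_K(h_nt)|^2|\varphi_f(t)|^2\,dt\le\|\varphi_K\|_\infty^2\|\varphi_f\|_2^2$ is bounded (using $\|\varphi_K\|_\infty\le\|K\|_1<\infty$ and $f\in L_2$), the bound $B_n=O(1)$ forces $h_n\ge h_0>0$. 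If $h_n$ were unbounded above, Riemann--Lebesgue would give $\varphi_K(h_{n_k}t)\to 0$ for $t\ne 0$ along a subsequence, and dominated convergence would force $A_{n_k}\to\int|\varphi_f|^2>0$, contradicting $A_n\to 0$. So $\{h_n\}$ lies in a compact subinterval of $(0,\infty)$.

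Extracting $h_{n_k}\to h_\ast\in(0,\infty)$, continuity of $\varphi_K$ together with dominated convergence yield $\int|\varphi_K(h_\ast t)-1|^2|\varphi_f(t)|^2\,dt=0$, so $\varphi_K(h_\ast t)=1$ for almost every $t$ with $\varphi_f(t)\ne 0$. Because $D_f=\infty$ and $\varphi_f$ is continuous, the open set $\{\varphi_f\ne 0\}$ meets neighbourhoods of points tending to infinity; on each such neighbourhood $\varphi_K(h_\ast\cdot\,)=1$ on a set of positive measure, producing points $s_N\to\infty$ with $\varphi_K(s_N)=1$, contradicting Riemann--Lebesgue. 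The main obstacle I expect is precisely this bandwidth bookkeeping in the reverse direction: pinning $h_n$ into a compact positive interval so that a single $h_\ast\in(0,\infty)$ can be extracted at which Riemann--Lebesgue delivers the contradiction.
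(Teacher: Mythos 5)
Your argument is correct, but note that the paper itself gives no proof of this statement: Theorem 3 is attributed to Davis (1977) and used as a black box, so there is nothing in the Proofs section to compare against line by line. What you have written is a self-contained proof assembled from the same toolkit the authors deploy elsewhere. Your forward direction is essentially the content of their Theorem 6 (quoted from Chac\'on et al., 2006): with a superkernel and any fixed $h_\star\le S_K/D_f$ the bias term in the Parseval decomposition vanishes identically, leaving a pure $O(n^{-1})$ variance, and the matching lower bound is the Watson--Leadbetter result the introduction recalls. Your reverse direction is the genuinely new piece relative to what the paper displays: the paper's Lemma 2 handles the dual statement ($S_K=0$ forces $n\Phi\to\infty$ for every density) by citing the bandwidth limit $h_{0n}\to 0$ from Chac\'on et al., whereas you work directly on the density side, pinning the optimal bandwidths into a compact subinterval of $(0,\infty)$ via the two halves of the Parseval identity and then extracting a limit $h_\ast$ at which $\varphi_K(h_\ast t)=1$ a.e.\ on $\{\varphi_f\neq0\}$, contradicting Riemann--Lebesgue when $D_f=\infty$. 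The details check out: $B_n\ge 2\pi R(K)/h_n-\Vert K\Vert_1^2\Vert\varphi_f\Vert_2^2$ with $R(K)>0$ gives the lower bandwidth bound, the upper bound follows from $A_{n_k}\to\int|\varphi_f|^2>0$ under $h_{n_k}\to\infty$, and continuity of $\varphi_f$ turns the a.e.\ identity into actual points $s_N\to\infty$ with $\varphi_K(s_N)=1$. The only dependency you should make explicit is the precise form of the Watson--Leadbetter lower bound ($\liminf_n n\Phi(n,f,K)>0$ for every nondegenerate $f,K\in L_2$), since ``exact order'' requires it in the forward direction; the paper states it only informally in the introduction.
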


Davis' theorem states that in kernel density estimation the MISE may
decrease to zero at rate $n^{-1}$ only if the characteristic
function of the density we aim to estimate has bounded support. An
example of this kind of density is given by the Fejér-de la
Vallé-Poussin density, $f(x)=(1-\cos x)/(\pi x^2)$, which has
characteristic function $\varphi_f(t)=(1-|t|)I_{[-1,1]}(t)$. Davis
(1977) even provides a kernel estimator that achieves the parametric
rate if the bandwidth is properly chosen (see also Ibragimov and
Khasminskii, 1982); however, her estimator is based on the sinc
function $S(x)=(\sin x)/(\pi x^2)$, which is not a kernel as it is
not an integrable function. In contrast, our Theorem 2 is valid for
true kernel functions and gives a condition that is not only
sufficient but also necessary for kernel density estimation at a
parametric rate.

We can combine theorems 2 and 3 to get:

\begin{teor} Let $K$ be a kernel with
$S_K=T_K$ and $f$ a density, both in $L_2$. Then,
$$\Phi(n,f,K)\text{ is of exact order }n^{-1}\text{ iff }S_K>0\text{ and }D_f<\infty.$$

\end{teor}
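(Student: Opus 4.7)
My plan is to split the equivalence into its two directions: the forward direction falls out of Theorems 2 and 3 for free, while the backward direction is where the actual work lies and requires explicit analysis of the kernel estimator.

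For the forward direction, if $\Phi(n,f,K)$ is of exact order $n^{-1}$, I would simply observe that the specific pair $(K,f)$ witnesses statement (ii) of both Theorems 2 and 3: this $f$ is ``some density'' achieving the parametric rate with kernel $K$, so Theorem 2 gives $S_K>0$; and this $K$ is ``some kernel'' achieving the parametric rate for $f$, so Theorem 3 gives $D_f<\infty$.

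For the backward direction, assuming $S_K>0$ and $D_f<\infty$, my plan is to display a single constant bandwidth $h_0\in(0,S_K/D_f)$ that already achieves $\mathrm{MISE}=O(1/n)$. The key observation is that $\varphi_K\equiv1$ on $[-S_K,S_K]$ (on $[0,S_K]$ by the definition of $S_K$ together with continuity of $\varphi_K$, and on the negative side by the Hermitian symmetry $\varphi_K(-t)=\overline{\varphi_K(t)}$), while $\varphi_f$ vanishes off $[-D_f,D_f]$; hence $h_0 D_f<S_K$ forces $\varphi_K(h_0 t)\varphi_f(t)=\varphi_f(t)$ identically in $t$. Plugging this into the Parseval decomposition
\begin{equation*}
\mathrm{MISE}_f(f_{n,K,h_0})=\frac{1}{2\pi}\int|\varphi_f(t)|^2\,|1-\varphi_K(h_0 t)|^2\,dt+\frac{1}{2\pi n}\int|\varphi_K(h_0 t)|^2(1-|\varphi_f(t)|^2)\,dt
\end{equation*}
kills the bias term outright, while the variance term, after the change of variable $u=h_0 t$ and a second use of Plancherel applied to both $K$ and $f$, reduces to $(\|K\|_2^2/h_0-\|f\|_2^2)/n$. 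The bracket is finite and strictly positive (otherwise the estimator would almost surely coincide with $f$), so $\Phi(n,f,K)\le C/n$ with $C=\|K\|_2^2/h_0-\|f\|_2^2$.

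The main obstacle will be the matching lower bound that certifies the rate is \emph{exactly} $n^{-1}$ and not faster. Here I would first invoke the Watson and Leadbetter (1963) result already cited in the Introduction, which rules out any kernel-estimator MISE decreasing faster than $n^{-1}$; a self-contained alternative is to note that on $h\in(0,S_K/D_f]$ the bias vanishes and $B(h):=\|K\|_2^2/h-\|f\|_2^2$ is strictly decreasing with minimum $B^*=\|K\|_2^2 D_f/S_K-\|f\|_2^2>0$, while any $h>S_K/D_f$ introduces a genuine positive bias that, after multiplication by $n$, keeps $n\,\mathrm{MISE}(h)$ bounded below by $B^*$ in the limit. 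Combined with the upper bound this yields $n\Phi(n,f,K)\to B^*\in(0,\infty)$, hence exact order $n^{-1}$.
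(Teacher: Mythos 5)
Your proof is correct and follows essentially the same route as the paper: the necessity direction is exactly the paper's stated combination of Theorems 2 and 3, and your sufficiency argument (a fixed bandwidth $h_0<S_K/D_f$ that kills the bias via the characteristic-function supports, plus the Watson--Leadbetter bound to rule out rates faster than $n^{-1}$) is precisely the content of the paper's Theorem 6, which you re-derive inline rather than cite.
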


The theorem above gives a precise characterization of the only case
where kernel density estimation at a parametric rate is possible.
Then, we may wonder what would happen if we use a superkernel when
the density does not fulfil the condition $D_f<0$, i.e., when kernel
density estimation at a parametric rate is not possible. In the
$L_1$ context, Devroye (1992) showed that superkernel estimators are
rate-adaptive, in the sense that they achieve the best possible rate
that the density permits. Below we show that this is also the case
in the $L_2$ setup.

\begin{teor}
Let $K$ be a superkernel and $f$ be a density, both in $L_2$. It is
verified:
\begin{enumerate}
\item[i)](Smooth case) If $f$ has a
$k$-th derivative in $L_1\cap L_2$, then $\Phi(n,f,K)$ goes to zero
as $n^{-2k/(2k+1)}$ or faster; that is, the sequence
$$n^{2k/(2k+1)}\Phi(n,f,K)$$
is bounded.
\item[ii)](Supersmooth case) If for some $\alpha>0$ and $\gamma>0$
the integral
$$I_{\alpha,\gamma}(f)=\int e^{\gamma|t|^\alpha}|\varphi_f(t)|^2dt$$
is finite, then $\Phi(n,f,K)$ goes to zero as $(\log
n)^{1/\alpha}/n$ or faster; that is, the sequence
$$\frac{n}{(\log n)^{1/\alpha}}\Phi(n,f,K)$$
is bounded.
\end{enumerate}
\end{teor}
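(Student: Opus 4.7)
The plan is to upper-bound the MISE by a variance-type term plus a bias-type term, pass to the Fourier domain via Plancherel, exploit the superkernel property $\varphi_K(t)=1$ for $|t|\leq S_K$ to restrict the bias integral to high frequencies $|t|>S_K/h$, and then choose $h$ in each case to balance the two pieces.

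Concretely, I would first bound the integrated variance by $\int K_h^2 = h^{-1}\int K^2$ and rewrite the integrated squared bias via Plancherel, obtaining
$$\text{MISE}_f(f_{n,K,h}) \leq \frac{\int K^2}{nh} + \frac{1}{2\pi}\int |\varphi_K(ht)-1|^2 |\varphi_f(t)|^2\,dt.$$
Since $K$ is a superkernel, the integrand in the second term vanishes for $|t|\leq S_K/h$, and since $K\in L_1$ gives $|\varphi_K|\leq \|K\|_1$, the factor $|\varphi_K(ht)-1|^2$ is uniformly bounded by some constant $C$. Hence
$$\text{MISE}_f(f_{n,K,h}) \leq \frac{\int K^2}{nh} + \frac{C}{2\pi}\int_{|t|>S_K/h} |\varphi_f(t)|^2\,dt,$$
reducing the problem to bounding the tail of $|\varphi_f|^2$ at frequency $S_K/h$.

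For the smooth case, I would invoke the Fourier derivative identity $\int|t|^{2k}|\varphi_f(t)|^2\,dt = 2\pi\|f^{(k)}\|_2^2$, whose validity is secured by the hypothesis $f^{(k)}\in L_1\cap L_2$ ($L_1$ justifies the repeated integration by parts, $L_2$ the Plancherel step), and estimate the tail integral by $(h/S_K)^{2k}\cdot 2\pi\|f^{(k)}\|_2^2$. The upper bound then takes the form $A_1/(nh)+A_2 h^{2k}$, which is minimized at $h\sim n^{-1/(2k+1)}$ and yields $\Phi(n,f,K)=O(n^{-2k/(2k+1)})$. For the supersmooth case, I would multiply and divide by $e^{\gamma|t|^\alpha}$ inside the tail integral and use its monotonicity in $|t|$: for $|t|>S_K/h$ one has $|\varphi_f(t)|^2\leq e^{-\gamma(S_K/h)^\alpha}e^{\gamma|t|^\alpha}|\varphi_f(t)|^2$, so the tail is at most $e^{-\gamma(S_K/h)^\alpha}I_{\alpha,\gamma}(f)$. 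The upper bound becomes $B_1/(nh)+B_2 e^{-\gamma(S_K/h)^\alpha}$, and the choice $h=S_K(\gamma/\log n)^{1/\alpha}$ equates the exponential with $1/n$ and produces $\Phi(n,f,K)=O((\log n)^{1/\alpha}/n)$.

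I do not anticipate a serious obstacle: everything reduces to standard Fourier tail estimates once the flat-top property of $\varphi_K$ is invoked to localize the bias to $|t|>S_K/h$. The only care needed is in justifying the Plancherel-based identities under the stated integrability hypotheses, and in verifying that the indicated bandwidth choices indeed realize (up to constants) the claimed rates.
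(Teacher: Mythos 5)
Your proposal is correct and follows essentially the same route as the paper: a Parseval bias--variance decomposition, localization of the bias integral to $|t|>S_K/h$ via the flat-top property of $\varphi_K$, the identity $\int|t|^{2k}|\varphi_f|^2=2\pi R(f^{(k)})$ (resp.\ the $e^{\gamma|t|^\alpha}$ tail bound), and the bandwidth choices $h\sim n^{-1/(2k+1)}$ and $h\sim(\log n)^{-1/\alpha}$. Your explicit bounding of $|\varphi_K(th)-1|^2$ by a constant via $|\varphi_K|\leq\|K\|_1$ is in fact slightly more careful than the paper, which tacitly uses the bound $1$, but this only affects constants, not the rates.
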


\begin{rem}
We have borrowed the terminology ``smooth" and ``supersmooth" case
from Glad, Hjort and Ushakov (1999), where a result similar to our
Theorem 5 is shown for the sinc kernel; see also Davis (1977).
Notice that when $D_f<\infty$ we are in the supersmooth case for all
$\alpha>0$. Some examples of densities with
$I_{\alpha,\lambda}(f)<\infty$ include the standard Gaussian
($\alpha=2$) and Cauchy ($\alpha=1$) densities. Also, it should be
remarked that Theorem 3.1 in Politis (2003) is the analogue to the
previous result in a pointwise sense (rather than for the MISE
criterion).
\end{rem}

\begin{rem}
Denote $R(g)=\int g(x)^2dx$ for any $g\in L_2$. From the proof of
Theorem 5 (see Section 4 below), in the smooth case the quantity
$n^{2k/(2k+1)}\Phi(n,f,K)$ can be bounded by
$$(2k+1)(2k)^{-2k/(2k+1)}\left(\frac{R(K)}{S_K}\right)^{2k/(2k+1)}R(f^{(k)}).$$
For all $k$, this bound depends on the superkernel $K$ only through
$R(K)/S_K$; therefore, we could try to find the supernernel $K$
minimizing this value, as it is done in the finite-order case. For
kernels of order 2, it is well-known that the kernel minimizing an
asymptotic version of the MISE is the so-called Epanechinikov
kernel; see, e.g., Silverman (1986). Here, in the superkernel case,
we have $R(K)\geq S_K/\pi$ for all $K$. This lower bound is
achievable if and only if $\varphi_K(t)=0$ for all $|t|\geq S_K$ but
clearly, among all the superkernels satisfying such a condition, the
only one fulfilling $R(K)=S_K/\pi$ is given by
$\varphi_K(t)=I_{[-S_K,S_K]}(t)$, which corresponds to (a rescaled
version of) the sinc kernel. In this sense, although the sinc
function does not provide a proper kernel, it is the asymptotically
optimal choice; that is, the analogue to the Epanechnikov kernel for
the superkernel case.
\end{rem}

Although Theorem 4 seems to be of purely theoretical interest, as it
says nothing about the main problem in kernel density estimation,
the choice of the bandwidth, this issue may be solved by using the
next result, which can be found in Chacón et al. (2006). Let us
recall the notation $h_{0n}(f)$ for the $L_2$-optimal bandwidth;
that is,
$$h_{0n}(f)=\mathop{\text{argmin}}_{h>0}\text{MISE}_f(f_{n,K,h}).$$

\begin{teor} Let $K$ be
a kernel and $f$ a density, both in $L_2$. If $S_K=T_K$ or $C_f=D_f$
then
$$h_{0n}(f)\to S_K/D_f\text{ as }n\to\infty.$$ Moreover,
if $S_K>0$ and $D_f<\infty$ then, for any fixed
$h_\star\in(0,S_K/D_f]$ (not depending on $n$), we have
$$E_f[f_{n,K,h_\star}(x)]=f(x),\text{ for a.e. }x\in\R,\forall n\in\N,$$
so that $\mbox{MISE}_f(f_{n,K,h_\star})$ is of exact order $n^{-1}$.
\end{teor}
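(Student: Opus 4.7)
The plan is to rewrite the MISE in the Fourier domain via Plancherel's identity and then treat the regimes $h\leq S_K/D_f$ and $h>S_K/D_f$ separately. Using $\widehat{K_h}(t)=\varphi_K(ht)$ and Plancherel, the standard bias--variance decomposition gives
\begin{equation*}
\text{MISE}_f(f_{n,K,h}) = \frac{R(K)}{nh} - \frac{\|K_h\ast f\|_2^2}{n} + B(h), \qquad B(h) := \frac{1}{2\pi}\int|\varphi_K(ht)-1|^2|\varphi_f(t)|^2\,dt.
\end{equation*}

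For the ``moreover'' part, fix $h_\star\in(0,S_K/D_f]$ and observe that $|h_\star t|\leq S_K$ whenever $|t|\leq D_f$; since $\varphi_K\equiv1$ on $[-S_K,S_K)$ and is continuous, $\varphi_K(h_\star t)=1$ on $\{|t|\leq D_f\}$, while $\varphi_f(t)=0$ on $\{|t|>D_f\}$. Hence $\varphi_K(h_\star t)\varphi_f(t)=\varphi_f(t)$ a.e., and Fourier uniqueness yields $E_f[f_{n,K,h_\star}]=K_{h_\star}\ast f=f$ a.e. Consequently $B(h_\star)=0$ and the MISE reduces to $n^{-1}(R(K)/h_\star-\|f\|_2^2)$, which is the variance integral, nonnegative by Cauchy--Schwarz and strictly positive (since a kernel with $\int K=1$ cannot be essentially constant on translates of $\Sop(f)$), giving the exact order $n^{-1}$.

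For the convergence $h_{0n}(f)\to h_0:=S_K/D_f$, the same computation shows that $\text{MISE}(h)=n^{-1}(R(K)/h-\|f\|_2^2)$ is strictly decreasing on $(0,h_0]$, hence minimized there at $h_0$. The main obstacle is to prove $B(h)>0$ for every $h>h_0$, which is precisely where the hypothesis $S_K=T_K$ or $C_f=D_f$ enters. If $S_K=T_K$, the set $\{|\varphi_K(ht)-1|>0\}$ has full Lebesgue measure in $\{|t|>T_K/h\}$, while continuity of $\varphi_f$ together with the definition of $D_f$ ensure that $\{\varphi_f\neq 0\}$ has positive measure in every left-neighborhood of $D_f$, yielding a positive-measure intersection. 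If instead $C_f=D_f$, then $\{\varphi_f\neq 0\}$ has full measure on $[0,D_f)$, and continuity of $\varphi_K$ combined with the definition of $S_K$ forces $\{|\varphi_K(ht)-1|>0\}$ to have positive measure in every right-neighborhood of $S_K/h<D_f$. Either way $B(h)>0$.

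The proof then concludes by contradiction: if some subsequence $h_{0n_k}\to h^*\neq h_0$, then $h^*<h_0$ would contradict the strict monotonicity of MISE on $(0,h_0]$, while $h^*\in(h_0,+\infty]$ would use continuity of $B$ to give $\liminf_k B(h_{0n_k})>0$, so $\text{MISE}(h_{0n_k})\geq B(h_{0n_k})$ stays bounded below by a positive constant whereas $\text{MISE}(h_0)=O(1/n_k)\to 0$; the boundary case $h_{0n_k}\to 0$ is excluded because then $R(K)/(n_k h_{0n_k})$ would itself exceed $\text{MISE}(h_0)$. Each alternative contradicts the optimality of $h_{0n_k}$, completing the proof.
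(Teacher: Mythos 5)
First, a point of comparison: the paper does not prove this statement at all --- Theorem~6 is imported verbatim from Chac\'on et al.\ (2006) (``the next result, which can be found in Chac\'on et al.\ (2006)''), so there is no in-paper proof to measure you against. Judged on its own terms, your Fourier-domain bias--variance decomposition is exactly the machinery the paper uses elsewhere (proof of Theorem~5, Lemma~2), and your treatment of the ``moreover'' part is correct: for $h_\star\leq S_K/D_f$ one has $\varphi_K(h_\star t)\varphi_f(t)=\varphi_f(t)$ for a.e.\ $t$, hence $K_{h_\star}*f=f$, zero integrated squared bias, and $\mathrm{MISE}=n^{-1}\bigl(R(K)/h_\star-R(f)\bigr)$. (For the strict positivity of that constant, rather than the vague remark about $K$ not being ``essentially constant on translates of $\Sop(f)$'', the clean argument is that $|\varphi_f(t)|<1$ for all $t\neq0$ because $f$ is absolutely continuous, so $R(K_{h_\star}*f)<R(K)/h_\star$ strictly.) Your verification that $B(h)>0$ for $h>S_K/D_f$ under either hypothesis $S_K=T_K$ or $C_f=D_f$ is also sound and is precisely where those hypotheses must enter.

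The genuine gap is in the first assertion. The claim $h_{0n}(f)\to S_K/D_f$ is made for arbitrary $K,f\in L_2$, which includes the degenerate cases $S_K=0$ or $D_f=\infty$, where the limit is $0$ (this is exactly Lemma~1(ii) of the paper for $S_K=0$). Your argument pivots on two steps that collapse there: ``MISE is strictly decreasing on $(0,h_0]$'' is vacuous when $h_0=0$, and, more seriously, the contradiction in the subsequence argument compares $\mathrm{MISE}(h_{0n_k})$ with $\mathrm{MISE}(h_0)=O(1/n_k)$, which is undefined when $h_0=0$. To cover those cases you must separately establish that $\Phi(n,f,K)\to0$ (standard: choose $h_n\to0$ with $nh_n\to\infty$, so both $B(h_n)$ and the variance term vanish) and then argue that $B(h_{0n})\to0$ forces $h_{0n}\to0$ because $B$ is continuous and strictly positive on $(0,\infty)$ with $\liminf_{h\to\infty}B(h)=R(f)>0$. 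Relatedly, your disposal of the alternative $h^*=+\infty$ cannot rest on ``continuity of $B$''; it needs the Riemann--Lebesgue lemma ($\varphi_K(ht)\to0$ pointwise, so $B(h)\to R(f)$ by dominated convergence). With these repairs the argument goes through; as written it only proves the convergence statement in the non-degenerate case $S_K>0$, $D_f<\infty$.
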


\begin{rem}
Theorem 6 suggests taking $h=S_K/D_f$ under the conditions of
Theorem 4. This is an asymptotic selection, as it is the limit of
the optimal bandwidth sequence but, also, in this case it provides
us with an unbiased kernel density estimator, whose MISE goes to
zero at a parametric rate. Indeed, in such a situation we can bound
$$n\Phi(n,f,K)\leq n \text{MISE}_f(f_{n,K,S_K/D_f})\leq
D_fR(K)/S_K,$$ so that same argument as in Remark 1 shows that the
sinc kernel is also the asymptotically optimal choice for the case
where  $D_f<\infty$.
\end{rem}

\begin{rem}
Any bandwidth $h_\star$ as in the previous theorem may be called a
global ``zero-bias bandwidth". In a similar way, Sain and Scott
(2002) show, for non-negative kernels, the existence of local
zero-bias bandwidths $h_0(x)$, not varying with $n$, for every $x$
in the region where $f$ is convex. Using this local bandwidths they
also get a $n^{-1}$ rate, but with respect to the pointwise mean
squared error.
\end{rem}





\section{A numerical illustration}

Next we give a simple numerical example showing the performance of
the superkernel estimators ``at full power", that is, in the optimal
situation where the characteristic function of the density has
bounded support. To do so, we are going to focus on the
aforementioned Fejér-de la Vallé-Poussin density
$$f(x)=\frac{1-\cos x}{\pi x^2},\quad x\in\R,$$
and the trapezoidal superkernel
$$K(x)=\frac{\cos x-\cos(2x)}{\pi x^2}, \quad x\in\R.$$

For this superkernel, we will use two different bandwidth selection
approaches: the first bandwidth is selected by a cross-validation
method (see Silverman, 1986, or Wand and Jones, 1995); the second
bandwidth comes from a version of the bandwidth selection procedure
proposed by Politis (2003). This method aims to estimate $D_f$
making use of the empirical characteristic function, and it is
closely related to the one proposed by Chiu (1991) for a similar
problem in density estimation (see also Politis and Romano, 1999).
If $\varphi_n(t)=n^{-1}\sum_{j=1}^n\exp\{itX_j\}$ denotes the
empirical characteristic function, $D_f$ is estimated by
$$\widehat D_n=\inf\{D>0\colon|\varphi_n(D+t)|^2<c\tfrac{\log n}{n},
\forall t\in(0,\ell_n)\},$$ where $c>0$ is a fixed constant and
$(\ell_n)$ is a positive nondecreasing sequence. As suggested in
Remark 3, the chosen bandwidth is then $\widehat h_n=1/\widehat
D_n$. Following the advice in Politis (2003), in all the simulations
we have taken $c=1$ and $\ell_n=1$.

We want to compare this superkernel density estimator with the
classical one, using a density function as a kernel. To this aim, we
also include in the simulations the results for the Sheather-Jones
method (Sheather and Jones, 1991), which uses the standard normal
density as the kernel, so that it is known that the MISE cannot
decrease faster than $n^{-4/5}$ (again, see Theorem 1 above).

We have tried these three methods for sample sizes $n=100$ (small),
$n=400$ (medium) and $n=1600$ (large) over 100 simulated samples of
each size drawn from the Fejér-de la Vallé-Poussin density. The
results are shown in Table 1. For each estimator $\hat f_n$ and
sample size we give the average and standard deviation of the 100
values of ISE$(\hat f_n)=10^3\times\int(\hat f_n-f)^2$.

\begin{table}[h]\centering
\begin{tabular}{|c|c|c|c|}\hline
n&$\text{ISE}_\text{CV}$&$\text{ISE}_\text{SJ}$&$\text{ISE}_\text{Pol}$\\\hline\hline
100&3.36&3.04&2.53\\
&(4.38)&(2.21)&(2.28)\\\hline
400&2.59&0.902&0.612\\
&(1.14)&(0.549)&(0.365)\\\hline 1600& 1.10&0.348&0.179\\
&(0.811)&(0.172)&(0.132)\\\hline
\end{tabular}
\label{tab}\caption{{\it Simulation results for sample sizes
$n=100,400,1600$. Averages and (standard deviations) of the ISE are
given for each method.}}
\end{table}

As usual, it can be seen from Table 1 that the cross-validated
selector is far more variable than the others. In this case, even
the average ISE is also unacceptably large, when it is used together
with a superkernel. In contrast, the selector of Politis does a good
work: it is comparable with the Sheather-Jones method for small
sample size, but the superior asymptotics of the superkernel
estimator clearly begin to take their advantage yet for $n=400$. For
large sample size, the better performance of the superkernel
estimator is even more evident, obtaining nearly half the average
ISE of the Sheather-Jones selector and less variance. Therefore, the
usefulness of superkernels in density estimation becomes clear, at
least in this case.

\section{Proofs}

The proof of our main result (Theorem 2) relies heavily on previous
results that may be found in Chacón et al. (2006). For the sake of
completeness we also include their statements here.

\begin{lema}
Let $f$ be a density and $K$ a kernel, both in $L_2$. It is
verified:
\begin{itemize}
\item[i) ]$R(K_h*f)\to R(f)$ as $h\to0.$
\item[ii)]If $S_K=0$ then $h_{0n}(f)\to0$ as $n\to\infty$.
\end{itemize}
\end{lema}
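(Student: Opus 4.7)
The plan for part (i) is to use Plancherel. Since $K\in L_2$ and $f\in L_1\cap L_2$ (every density is $L_1$), we have $K_h*f\in L_2$ and
$$R(K_h*f)=\frac{1}{2\pi}\int|\varphi_K(ht)|^2|\varphi_f(t)|^2\,dt.$$
As $h\to 0$ the integrand converges pointwise to $|\varphi_f(t)|^2$, since $K\in L_1$ makes $\varphi_K$ continuous at $0$ with $\varphi_K(0)=1$; it is also dominated by $\|K\|_1^2|\varphi_f(t)|^2$, which is integrable by Plancherel ($f\in L_2$). Dominated convergence then gives $R(K_h*f)\to (2\pi)^{-1}\int|\varphi_f|^2=R(f)$.

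For part (ii) I would start from the Fourier decomposition of the MISE,
$$\text{MISE}_f(f_{n,K,h})=\frac{R(K)}{nh}-\frac{R(K_h*f)}{n}+\frac{1}{2\pi}\int|\varphi_K(ht)-1|^2|\varphi_f(t)|^2\,dt,$$
obtained by splitting the integrated variance as $R(K_h)/n-R(K_h*f)/n$, using $R(K_h)=R(K)/h$, and applying Plancherel to the squared bias $\|K_h*f-f\|_2^2$. Call the final summand $B(h)$; note $B(h)\ge 0$ and is independent of $n$. Plugging in the benchmark bandwidth $h=n^{-1/2}$ and applying dominated convergence to $B(n^{-1/2})$ (with domination $(\|K\|_1+1)^2|\varphi_f|^2$) shows $\text{MISE}_f(f_{n,K,n^{-1/2}})\to 0$. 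Optimality of $h_{0n}(f)$ then forces $\text{MISE}_f(f_{n,K,h_{0n}(f)})\to 0$, and therefore also $B(h_{0n}(f))\to 0$.

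Now suppose, for contradiction, that $h_{0n}(f)\not\to 0$, and extract a subsequence $h_{0n_k}(f)\to h^\star\in(0,\infty]$. If $h^\star<\infty$, dominated convergence yields $B(h_{0n_k}(f))\to B(h^\star)=0$, i.e.\ $\varphi_K(h^\star t)=1$ for $|\varphi_f(t)|^2$-almost every $t$. Continuity of $\varphi_f$ and $\varphi_f(0)=1$ give an open neighborhood $U$ of $0$ on which $|\varphi_f|>0$, so $\varphi_K(h^\star t)=1$ a.e.\ on $U$; continuity of $\varphi_K$ upgrades this to $\varphi_K\equiv 1$ on the open neighborhood $h^\star U$ of $0$, contradicting $S_K=0$. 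If $h^\star=\infty$, the Riemann--Lebesgue lemma gives $\varphi_K(h_{0n_k}(f)t)\to 0$ for each $t\ne 0$; another application of dominated convergence forces $B(h_{0n_k}(f))\to (2\pi)^{-1}\int|\varphi_f|^2 = R(f)>0$, again contradicting $B(h_{0n_k}(f))\to 0$.

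The main technical point is establishing the Fourier formula for the MISE, which is routine bookkeeping. The rest consists of dominated convergence estimates and the Riemann--Lebesgue lemma; the only place where the hypothesis $S_K=0$ actually enters is the passage from ``$\varphi_K(h^\star\cdot)=1$ almost everywhere on $U$'' to ``everywhere on the open set $h^\star U$'' via continuity of $\varphi_K$, which is the delicate but unavoidable step.
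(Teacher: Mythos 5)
The paper does not actually prove this lemma: it is imported verbatim from Chac\'on et al.\ (2006) and stated ``for the sake of completeness'' without argument, so there is no in-paper proof to compare against. Your proposal is a correct, self-contained derivation, and it uses exactly the Fourier-analytic machinery the paper itself deploys later (the Parseval decomposition $2\pi\,\mathrm{MISE}=B(h)+V(h)$ in the proof of Theorem 5, and the variance identity $\int\Var_f[f_{n,K,h}(x)]dx=R(K)/(nh)-R(K_h*f)/n$ quoted in the proof of Lemma 2). Part (i) is a clean dominated-convergence argument with the correct dominating function $\|K\|_1^2|\varphi_f|^2$ (important since $K$ may take negative values, so $\|K\|_1$ can exceed $1$). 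In part (ii), the subsequence argument on the compactified half-line is sound; the two cases are handled correctly (continuity of $\varphi_K$ upgrading ``$=1$ a.e.\ on a neighborhood'' to ``$\equiv 1$ on a neighborhood,'' contradicting $S_K=0$; Riemann--Lebesgue forcing $B\to R(f)>0$ when $h^\star=\infty$). The only step you leave implicit is that $\mathrm{MISE}\ge B(h)$, which requires observing that the integrated variance $R(K)/(nh)-R(K_h*f)/n$ is nonnegative (it is an integral of pointwise variances); this is immediate but worth one sentence. You may also take the existence of $h_{0n}(f)$ for granted, as the paper assumes the technical conditions ($K$ bounded, continuous at zero, $\int K^2<2K(0)$) that guarantee it.
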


For the proof of Theorem 2 we will need an auxiliary result. It
states that if we use a kernel $K$ with $S_K=0$, then the
MISE-convergence rate is slower than $n^{-1}$ for every density. It
can be applied, for instance, to finite-order kernels, as it is easy
to show that any kernel of finite order satisfies $S_K=0$.

\begin{lema} If $K\in L_2$ is a kernel such
that $S_K=0$ then, for every density $f\in L_2$, we have that
$$\lim_{n\to\infty} n\Phi(n,f,K)=\infty.$$
\end{lema}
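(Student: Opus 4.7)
The plan is to lower-bound $n\Phi(n,f,K)$ by the integrated variance contribution alone, and then use the hypothesis $S_K=0$ to force this lower bound to diverge, via Lemma 1.

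First, decomposing the MISE into integrated variance and integrated squared bias and computing the variance part directly gives
$$\text{MISE}_f(f_{n,K,h}) = \frac{1}{n}\Big(\frac{R(K)}{h} - R(K_h * f)\Big) + R(K_h * f - f).$$
Since the second term is non-negative and $\Phi(n,f,K)=\text{MISE}_f(f_{n,K,h_{0n}(f)})$, evaluating at $h_n:=h_{0n}(f)$ and multiplying by $n$ yields
$$n\,\Phi(n,f,K) \ge \frac{R(K)}{h_n} - R(K_{h_n}*f).$$

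Now I apply Lemma 1: part (ii), available since $S_K=0$ by assumption, gives $h_n\to 0$, whence $R(K)/h_n\to\infty$ (using $R(K)>0$, which holds because $\int K = 1$ rules out $K\equiv 0$); and part (i), applied along the sequence $h=h_n$, gives $R(K_{h_n}*f)\to R(f)<\infty$. Consequently the right-hand side of the displayed inequality diverges, which is exactly the claim $n\Phi(n,f,K)\to\infty$.

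The argument contains essentially no obstacle once Lemma 1 is available: the whole substance of the statement is encapsulated in part (ii), which says precisely that a kernel with $S_K=0$ cannot sustain an optimal bandwidth bounded away from zero. The conclusion then follows because the integrated variance, of order $1/(nh_n)$, dominates the parametric $n^{-1}$ scale as soon as $h_n\to 0$, while the subtracted term $R(K_{h_n}*f)$ remains bounded by Lemma 1 (i). In effect, the proof reduces to the simple observation that a vanishing bandwidth makes the variance contribution, by itself, larger than any constant multiple of $n^{-1}$.
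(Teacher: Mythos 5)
Your proposal is correct and follows essentially the same route as the paper's own proof: lower-bound $n\Phi(n,f,K)$ by the integrated variance $R(K)/h_{0n}(f)-R(K_{h_{0n}(f)}*f)$ evaluated at the optimal bandwidth, then invoke Lemma 1(ii) for $h_{0n}(f)\to0$ and Lemma 1(i) for the boundedness of the subtracted term. The only addition is your explicit remark that $R(K)>0$, which the paper leaves implicit.
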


\begin{proof}
It is easy to show that
$$\textstyle\int\Var_f[f_{n,K,h}(x)]dx=R(K)/(nh)-R(K_h*f)/n,$$
where $*$ stands for convolution (see Wand and Jones, 1995).
Therefore,
\begin{align*}
n\Phi(n,f,K)&=n\mbox{MISE}_f(f_{n,K,h_{0n}(f)})\\&\geq
n\textstyle\int\Var_f[f_{n,K,h_{0n}(f)}(x)]dx
\\&=\frac{R(K)}{h_{0n}(f)}-\textstyle R(K_{h_{0n}(f)}*f)
\end{align*}
Then, the conclusion follows immediately from Lemma 1.
\end{proof}

\begin{proof}[Proof of Theorem 2]
If $S_K>0$, then Theorem 6 states that it suffices to consider a
density with $D_f<\infty$, such as the Fejér-de la Vallée-Poussin
density, to get a parametric MISE-convergence rate. On the other
hand, the previous lemma shows precisely the implication
$ii)\Rightarrow i)$.
\end{proof}

\begin{proof}[Proof of Theorem 5]
In the smooth case, standard Fourier transform theory shows that the
conditions on $f$ ensure that $$\int|t|^{2k}|\varphi_f(t)|^2dt=2\pi
R(f^{(k)})<\infty.$$ Using Parseval identity,
$2\pi\text{MISE}_f(f_{n,K,h})=B(h)+V(h),$ where
\begin{align*}
0\leq B(h)&=\int|\varphi_f(t)|^2|\varphi_K(th)-1)|^2dt\\
0\leq
V(h)&=\frac{1}{nh}\int|\varphi_K(t)|^2dt-\frac{1}{n}\int|\varphi_f(t)|^2|\varphi_K(th)|^2dt.
\end{align*}
Then, we can bound $V(h)$ by $\int|\varphi_K|^2/(nh)$ and
\begin{align*}
B(h)&=\int_{|t|>S_K/h}|\varphi_f(t)|^2|\varphi_K(th)-1|^2dt\\
&\leq\int_{|t|>S_k/h}|\varphi_f(t)|^2dt\\
&\leq\frac{h^{2k}}{S_K^{2k}}\int|t|^{2k}|\varphi_f(t)|^2dt
\end{align*}
so that
$$\text{MISE}_f(f_{n,K,h})\leq\frac{h^{2k}}{S_K^{2k}}R(f^{(k)})+\frac{R(K)}{nh}.$$
Calculating the minimum of the expression on the right-hand-side of
the previous display, we get
$$\Phi(n,f,K)\leq Cn^{-2k/(2k+1)},$$
as desired.

For the supersmooth case, the same kind of calculations can be used
to bound
$$B(h)\leq e^{-S_K\gamma/h^\alpha}I_{\alpha,\gamma}(f).$$
Now, taking $h$ to be of order $(\log n)^{-1/\alpha}$ in $B(h)+V(h)$
gives the proof.
\end{proof}

\end{document}